\newtheorem{theorem}{Theorem}
\newtheorem{proposition}{Proposition}
\newtheorem{lemma}{Lemma}
\newtheorem{corollary}{Corollary}
\theoremstyle{remark}
\newtheorem{remark}{\bf Remark}
\newcommand{\C}{\mathbb{C}}
\newcommand{\D}{\Omega}
\newcommand{\Dc}{\overline{\Omega}}
\newcommand{\dbar}{\overline{\partial}}
\title[Compactness of products of Hankel operators]{Compactness of products of
Hankel operators on the polydisk and  some product domains in $\C^2$}
\author{\u{Z}eljko \u{C}u\u{c}kovi\'c}
\author{S\"{o}nmez \c{S}ahuto\u{g}lu}
\address{University of Toledo, Department of Mathematics,  Toledo, OH
43606, USA}
\email[\u{Z}eljko \u{C}u\u{c}kovi\'c]{zcuckovi@math.utoledo.edu}
\email[S\"{o}nmez \c{S}ahuto\u{g}lu]{sonmez.sahutoglu@utoledo.edu}
\subjclass[2000]{47B35}
\keywords{Hankel operators, Berezin transform}
\date{\today}
\begin{document}
\begin{abstract}
Let $\mathbb{D}^n$ be the polydisk in $\C^n$ and the symbols
$\phi,\psi\in C(\overline{\mathbb{D}^n})$  such that $\phi$ and $\psi$ are
pluriharmonic on any $(n-1)$-dimensional polydisk in the boundary of
$\mathbb{D}^{n}.$ Then $H^*_{\psi}H_{\phi}$ is compact on 
$A^2(\mathbb{D}^n)$ if and only if for every $1\leq j,k\leq n$ such that  $j\neq
k$ and any $(n-1)$-dimensional polydisk $D$, orthogonal to the $z_j$-axis in the
boundary of $\mathbb{D}^n,$ either $\phi$ or $\psi$ is holomorphic in $z_k$ on 
$D.$ Furthermore, we prove a different  sufficient condition for compactness of
the products of Hankel operators. In $\C^2,$  our techniques can be used to
get a necessary condition  on some product domains involving annuli.
\end{abstract}

\maketitle

\section*{Introduction}
In this paper we would like to understand how compactness of products of
Hankel operators interacts with the behavior of the symbols on the boundary.  We
choose to work on the polydisk and some other product domains in $\C^2.$
However, we believe that this approach could be useful on more general 
domains.

Let $\D$ be a domain in $\C^n$ and $dV$ denote the Lebesgue volume measure on
$\D.$ The Bergman space $A^2(\D)$ is the closed subspace of $L^2(\D,dV)$
consisting of all holomorphic functions on $\D.$ The Bergman projection $P$ is
the orthogonal projection from $L^2(\D)$ onto $A^2(\D).$ For a function $\phi\in
L^{\infty}(\D),$ the Toeplitz operator $T_{\phi}:A^2(\D)\to A^2(\D)$ is defined
by $T_{\phi}=PM_{\phi}$ where $M_{\phi}$ is the multiplication operator by
$\phi.$

In their famous paper, Brown and Halmos \cite{BrownHalmos63/64} introduced 
Toeplitz operators on the Hardy space on the unit disk $\mathbb{D}$ of the
complex plane and discovered the most fundamental algebraic properties of these
operators. The corresponding questions for the Bergman space remained
elusive for several decades. In 1991, Axler and the first author
\cite{AxlerCuckovic91} characterized commuting Toeplitz operators with harmonic
symbols on $\mathbb{D}$ and thus obtained an analogue of the corresponding
theorem of Brown and Halmos. In 2001, Ahern and the first author
\cite{AhernCuckovic01} studied when a product of two Toeplitz operators is equal
to another Toeplitz operator. They considered bounded harmonic functions $\phi$
and $\psi,$ and a bounded $C^2$-symbol $\xi$ with bounded invariant Laplacian.
Their main result is that $T_{\phi}T_{\psi}=T_{\xi}$ if and only if $\phi$ is
conjugate holomorphic or $\psi$ is holomorphic. Later Ahern \cite{Ahern04}
removed the  assumption on $\xi$ and assumed that $\xi\in
L^{\infty}(\mathbb{D})$ only. One of the consequences of the main result in
\cite{AhernCuckovic01} is that the semicommutator of Toeplitz operator,
$T_{\phi}T_{\psi}-T_{\phi\psi}=0,$ only in trivial cases. This result was
obtained earlier by Zheng \cite{Zheng89}, using different methods. In fact,
Zheng characterized compact semicommutators of Toeplitz operators with harmonic
symbols on the unit disk. If $\phi=\phi_1+\overline\phi_2$ and
$\psi=\psi_1+\overline\psi_2$ are bounded and harmonic on $\mathbb{D},$ where
$\phi_1,\phi_2,\psi_1,$ and $\psi_2$ are holomorphic, then compactness of
$T_{\phi}T_{\psi}-T_{\phi\psi}$ is equivalent to the condition  
\[ \lim_{|z|\to 1} \min\{ (1-|z|^2)|\phi_1'(z)|,(1-|z|^2)|\psi_2'(z)|\}=0\]
Later several authors \cite{DingTang01,ChoeKooLee04} extended this
result to the Bergman space of the polydisk $\mathbb{D}^n$ and in 2007, Choe,
Lee, Nam, and Zheng \cite{ChoeLeeNamZheng07} found characterizations of
compactness of $T_{\phi}T_{\psi}-T_{\xi}$ on the polydisk, thus extending
Ahern's result. 

A semicommutator of two Toeplitz operators can be expressed in terms of Hankel
operators. For $\phi\in L^{\infty}(\D),$ the Hankel operator
$H_{\phi}:A^2(\D)\to A^2(\D)^{\perp}$ is defined by $H_{\phi}=(I-P)M_{\phi}.$
The following relation between Toeplitz operators and Hankel operators is well
known: 
\[T_{\overline\phi \psi}-T_{\overline\phi}T_{\psi}=H_{\phi}^*H_{\psi}. \]
Thus the semicommutator can be expressed as a product of an adjoint of a Hankel
operator with another Hankel operator. Our approach is also motivated by our
previous paper \cite{CuckovicSahutoglu09} in which we studied compactness of one
Hankel operator on pseudoconvex domains in $\C^n$ in terms of the behavior of
the symbol of the operator on disks in the boundary. Thus, when faced with the 
product of two Hankel operators, we are interested in finding how compactness of
$H_{\phi}^*H_{\psi}$ interacts with the behavior of $\phi$ and $\psi$ on
the boundary of the domain.  

We finish the introduction by listing our results. Let $\xi\in
\overline{\mathbb{D}}$ and 
\[D(\xi,j)=\{(z_1,\ldots,z_{j-1},z_j,z_{j+1},\ldots,z_n)\in \mathbb{D}^{n}:
z_j=\xi \}.\]
\begin{theorem} \label{ThmPolydisk}
Let $\mathbb{D}^n$ be the polydisk in $\C^n, n\geq 2,$ and the symbols
$\phi,\psi\in C(\overline{\mathbb{D}^n})$ such that $\phi|_{D(\xi,j)}$ and
$\psi|_{D(\xi,j)}$ are pluriharmonic for all $1\leq j\leq n$ and all $|\xi|=1.$
Then $H^*_{\psi}H_{\phi}$ is compact on $A^2(\mathbb{D}^n)$ if and only if for
any $1\leq j,k\leq n$ such that $j\neq k$ and $|\xi|=1,$ either
$\phi|_{D(\xi,j)}$ or $\psi|_{D(\xi,j)}$ is holomorphic in $z_k$ on  $D(\xi,j).$
\end{theorem}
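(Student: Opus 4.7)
The approach is to analyze compactness through the Berezin transform $\widetilde{H_\psi^* H_\phi}(z)=\langle H_\phi k_z,H_\psi k_z\rangle$, using the key identity that whenever $\phi=u+\overline v$ is pluriharmonic with $u,v$ holomorphic, $P(\overline v\,k_z)=\overline{v(z)}k_z$ and hence $H_\phi k_z=(\overline v-\overline{v(z)})k_z$. Since the pluriharmonic decomposition is available only on each boundary polydisk $D(\xi,j)$, I propagate information between interior and boundary by a localization: as $z_j\to\xi$ with $|\xi|=1$ and the remaining coordinates $p'$ held in $\mathbb{D}^{n-1}$, the measure $|k_{z_j}|^2\,dA$ concentrates at $\xi$ and the uniform continuity of the symbols on $\overline{\mathbb{D}^n}$ yields
\[\lim_{z_j\to\xi}\langle H_\phi k_z,H_\psi k_z\rangle_{A^2(\mathbb{D}^n)}=\langle H_{\phi|_{D(\xi,j)}}k_{p'},H_{\psi|_{D(\xi,j)}}k_{p'}\rangle_{A^2(\mathbb{D}^{n-1})}.\]

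For necessity, compactness together with weak convergence $k_z\to 0$ forces this limit to be zero for every $p'\in\mathbb{D}^{n-1}$. Writing $\phi|_{D(\xi,j)}=a+\overline b$ and $\psi|_{D(\xi,j)}=c+\overline d$, the limit equals the $(n-1)$-dimensional Berezin transform of $H_{\overline d}^{*}H_{\overline b}=T_{d\overline b}-T_dT_{\overline b}$ at $p'$. Its identical vanishing, combined with the injectivity of the polydisk Berezin transform (via sesqui-analyticity of $(w,\overline z)\mapsto \langle TK_z,K_w\rangle$), yields the operator identity
\[P(d\overline b\cdot h)=d\cdot P(\overline b h)\qquad\text{for every }h\in A^2(\mathbb{D}^{n-1}).\]
Testing on monomials $h=z^\nu$ and expanding via the explicit formula $P(z^\alpha\overline z^\beta)(w)=\prod_i\frac{\alpha_i-\beta_i+1}{\alpha_i+1}\,w^{\alpha-\beta}$ (for $\alpha\ge\beta$, zero otherwise) produces a linear system in the Taylor coefficients $\{b_\alpha\},\{d_\alpha\}$; its analysis forces, for each coordinate direction $k\neq j$, either all $b_\alpha$ with $\alpha_k>0$ to vanish or all $d_\alpha$ with $\alpha_k>0$ to vanish, which is precisely the claimed directional holomorphy.

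For sufficiency, take an arbitrary sequence $z^{(m)}\to\partial\mathbb{D}^n$, pass to a subsequence with $z_i^{(m)}\to\xi_i\in\overline{\mathbb{D}}$, and choose any $j$ with $|\xi_j|=1$. The same localization reduces the limit of $\widetilde{H_\psi^{*}H_\phi}(z^{(m)})$ to an $(n-1)$-dimensional integral of the form $\int(\overline b-\overline{b(p')})(d-d(p'))|k_{p'}|^2\,dV$. The hypothesis that, for each $k\neq j$, one of $\partial_k b$ or $\partial_k d$ vanishes identically on $D(\xi_j,j)$ kills the cross terms $\overline{\partial_k b}\cdot\partial_k d$ and their higher-order analogues in the Taylor expansion of the integrand about $\zeta=p'$, giving limit zero. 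Since $H_\psi^{*}H_\phi$ lies in the Toeplitz algebra, boundary vanishing of its Berezin transform is equivalent to compactness, and the result follows.

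The main obstacle is the final structural step of the necessity direction: passing from the operator identity $H_{\overline d}^{*}H_{\overline b}=0$ on $A^2(\mathbb{D}^{n-1})$ to the pointwise directional statement ``one of $b,d$ is constant in $z_k$'' for each $k$. Classical Toeplitz-product theorems of Brown--Halmos and Ahern--\v{C}u\v{c}kovi\'c type yield only the stronger conclusion that one symbol must be fully constant, whereas here we need only the weaker statement, separately for each coordinate direction. Extracting this cleanly requires either the Taylor-coefficient analysis sketched above or an induction that peels off one boundary direction at a time.
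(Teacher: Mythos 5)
Your overall architecture is the same as the paper's: localize at a boundary point of the form $(\,p',\xi)$ with $|\xi|=1$ by tensoring with concentrating normalized kernels $k_{z_j}$, split each symbol into a piece vanishing on $D(\xi,j)$ (handled by uniform continuity and the concentration of $|k_{z_j}|^2$) and the restriction to $D(\xi,j)$ extended trivially in $z_j$, and thereby reduce both directions to a statement about $H^*_{\overline d}H_{\overline b}$ on $A^2(\mathbb{D}^{n-1})$ for pluriharmonic boundary restrictions. Your sufficiency argument is sound: the hypothesis makes $\overline b\,d$ harmonic in each variable separately, so the $(n-1)$-dimensional Berezin transform of $H^*_{\overline d}H_{\overline b}$ vanishes identically, and the Axler--Zheng/Engli\v{s} criterion finishes the proof; the paper instead quotes Ding--Tang to get $H^*_{\psi_1}H_{\phi_1}=0$ outright, but your route is an acceptable substitute. (One detail to nail down: when $z^{(m)}\to\partial\mathbb{D}^n$ the coordinates other than $z_j$ may also leave compact subsets, so you should argue from the identical vanishing of the $(n-1)$-dimensional Berezin transform, hence of the operator, rather than from a pointwise limit at a fixed interior $p'$.)

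The genuine gap is exactly where you say it is, and it is not optional: the necessity direction is incomplete without a proof that $H^*_{\overline d}H_{\overline b}=0$ on $A^2(\mathbb{D}^{n-1})$, for $b,d$ holomorphic, forces for each coordinate $k$ either $b$ or $d$ to be independent of $z_k$. You correctly observe that one-variable Brown--Halmos/Ahern--\v{C}u\v{c}kovi\'c type theorems give the wrong (global) conclusion and that the per-coordinate statement is what is needed, but the Taylor-coefficient ``linear system'' whose ``analysis forces'' the conclusion is only asserted, not carried out; as written the proof stops one step short of the theorem. This is precisely the point the paper outsources to Theorem 2.3 of Ding and Tang \cite{DingTang01} (the characterization of vanishing semicommutators $T_{d\overline b}-T_dT_{\overline b}$ with pluriharmonic symbols on the polydisk). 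Either cite that result or actually execute the coefficient computation (e.g., by an induction on the coordinate directions, freezing all but one variable and reducing to the one-dimensional zero-semicommutator theorem); until then the necessity half is a sketch rather than a proof. A minor stylistic difference: the paper tests against arbitrary $F\in A^2(\mathbb{D}^{n-1})$ tensored with $k_{p_j}$ and concludes $\langle H_{\phi_1}F,H_{\psi_1}F\rangle=0$ for all $F$ directly, whereas you test only against $F=k_{p'}$ and then invoke injectivity of the Berezin transform; both are fine.
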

In $\C^2$ the above theorem immediately implies the following
corollary.
\begin{corollary}
\label{CorBidisk}
Let $\mathbb{D}^2$ be the bidisk in $\C^2$ and the symbols
$\phi,\psi\in C(\overline{\mathbb{D}^2})$ such that $\phi\circ g$ and $\psi\circ
g$ are harmonic for all holomorphic $g:\mathbb{D}\to \partial\mathbb{D}^2.$ Then
$H^*_{\psi}H_{\phi}$ is compact on $A^2(\mathbb{D}^2)$ if and only if for any
holomorphic function $g:\mathbb{D}\to \partial\mathbb{D}^2,$ either $\phi\circ
g$ or $\psi\circ g$ is holomorphic.
\end{corollary}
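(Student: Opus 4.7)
The plan is to reduce the corollary directly to Theorem \ref{ThmPolydisk}. First I would verify the theorem's pluriharmonicity hypothesis in the bidisk setting. Fix $|\xi|=1$; then $g_1(z)=(\xi,z)$ and $g_2(z)=(z,\xi)$ are holomorphic maps from $\mathbb{D}$ into $\partial\mathbb{D}^2$, so the corollary's assumption gives that $\phi\circ g_i$ and $\psi\circ g_i$ are harmonic. This is exactly the pluriharmonicity of $\phi|_{D(\xi,i)}$ and $\psi|_{D(\xi,i)}$ (pluriharmonicity in one complex variable being just harmonicity), so Theorem \ref{ThmPolydisk} applies.

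The key geometric step is classifying holomorphic maps $g=(g_1,g_2):\mathbb{D}\to\partial\mathbb{D}^2$. Since $\max(|g_1|,|g_2|)\equiv 1$, the two closed sets $\{|g_i|=1\}$ cover $\mathbb{D}$; but any bounded holomorphic function on $\mathbb{D}$ attaining its maximum modulus on a set with nonempty interior must be constant by the maximum modulus principle. Hence at least one of $g_1,g_2$ is a unimodular constant $\xi$, and $g$ has the form $(\xi,h(z))$ or $(h(z),\xi)$ with $h:\mathbb{D}\to\overline{\mathbb{D}}$ holomorphic. In other words, every holomorphic $g$ factors through an inclusion $D(\xi,j)\hookrightarrow\partial\mathbb{D}^2$ for some $j\in\{1,2\}$.

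With this classification, the equivalence drops out in both directions. For the forward direction, plugging the inclusions $z\mapsto(\xi,z)$ and $z\mapsto(z,\xi)$ into the corollary's hypothesis immediately yields the theorem's condition for both index pairs $(j,k)=(1,2)$ and $(2,1)$. For the backward direction, given an arbitrary holomorphic $g=(\xi,h)$ (the case $g=(h,\xi)$ being symmetric), the theorem supplies, say, a holomorphic $\phi(\xi,\cdot)$ on $\mathbb{D}$, and then $\phi\circ g(z)=\phi(\xi,h(z))$ is holomorphic as a composition. The only mild subtlety is that $\phi(\xi,\cdot)$ is asserted holomorphic on the open disk only; but a non-constant $h$ maps $\mathbb{D}$ into $\mathbb{D}$ again by maximum modulus (and the constant case is trivial), so the composition is well-defined and holomorphic. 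No serious obstacle is expected: all the substance of the corollary sits in the geometric classification of holomorphic disks in $\partial\mathbb{D}^2$.
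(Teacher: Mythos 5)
Your proposal is correct and is essentially the argument the paper intends: the corollary is stated as an immediate consequence of Theorem \ref{ThmPolydisk}, and the only content to supply is exactly your classification of analytic disks in $\partial\mathbb{D}^2$ as lying in a single face $\{\xi\}\times\overline{\mathbb{D}}$ or $\overline{\mathbb{D}}\times\{\xi\}$, together with the observation that harmonicity/holomorphy along such a face transfers to and from arbitrary disks by composition. (A minor simplification: since $|g_i|\leq 1$ everywhere, a single point of $\mathbb{D}$ where $|g_i|=1$ already forces $g_i$ to be constant, so the Baire/nonempty-interior step is not needed.)
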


\begin{remark}
Let $\phi(z_1,z_2)=\chi_1(z_1,z_2)+z_1\overline{z}_2$ and
$\psi(z_1,z_2)=\chi_2(z_1,z_2)+\overline{z}_1z_2$ where $\chi_1,\chi_2\in
C^{\infty}_0(\mathbb{D}^2).$ Then $\phi$ and $\psi$ are smooth
functions but their restrictions on $\partial\mathbb{D}^2$ cannot be
extended onto $\overline{\mathbb{D}^2}$ as pluriharmonic functions. So unlike
the results in \cite{DingTang01,ChoeLeeNamZheng07} Theorem \ref{ThmPolydisk}
applies  to such symbols and provides many examples of (non-zero) compact
products of Hankel operators. Hence our result generalizes the previously
mentioned results in the sense that our symbols do not have to  be pluriharmonic
on $\mathbb{D}^n.$ On the other hand, we require the symbols to be continuous up
to the boundary. 
\end{remark}

In fact our method can be used to remove the plurihamonicity condition on the
symbols when proving the sufficiency, if we are willing to assume more about
the symbols.

\begin{theorem}\label{ThmSufficient}
Let $\mathbb{D}^n$ be the polydisk in $\C^n, n\geq 2,$ and the symbols 
$\phi,\psi\in C^1(\overline{\mathbb{D}^n}).$ Assume that for any holomorphic
function $g:\mathbb{D}\to \partial\mathbb{D}^n,$ either $\phi\circ g$ or
$\psi\circ g$ is holomorphic. Then $H^*_{\psi}H_{\phi}$ is compact on
$A^2(\mathbb{D}^n).$ 
\end{theorem}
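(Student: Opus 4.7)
The plan is to reduce compactness of $H_{\psi}^{*}H_{\phi}$ to the vanishing of $\|H_{\psi}^{*}H_{\phi}k_{z}\|_{A^2}$ as $z\to\partial\mathbb{D}^n$, where $k_z$ is the normalized Bergman kernel. Since $H_{\psi}^{*}H_{\phi}=T_{\overline{\psi}\phi}-T_{\overline{\psi}}T_{\phi}$, the operator lies in the Toeplitz algebra, so one may appeal to the polydisk analogue of the Su\'arez--Axler--Zheng characterization of compact operators in this algebra via the boundary behavior of $\|Tk_z\|$. The main technical device is the family of unitary weighted composition operators $U_z$ on $A^2(\mathbb{D}^n)$ induced by the M\"obius automorphisms $\varphi_z$ with $\varphi_z(0)=z$; they satisfy $U_z(1)=k_z$ and $U_z^{-1}H_\phi U_z=H_{\phi\circ\varphi_z}$, yielding
\[
\|H_{\psi}^{*}H_{\phi}k_{z}\|_{A^2}=\bigl\|P\bigl(\overline{\psi\circ\varphi_z}\,(I-P)(\phi\circ\varphi_z)\bigr)\bigr\|_{L^2}.
\]

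Fix a sequence $z_m\to p\in\partial\mathbb{D}^n$ and set $S=\{j:|p_j|=1\}$. For $j\in S$, $\varphi_{z_{m,j}}(w_j)\to p_j$ pointwise for every $w_j\in\mathbb{D}$; for $j\notin S$, $\varphi_{z_{m,j}}(w_j)\to\varphi_{p_j}(w_j)$, a disk automorphism. Continuity of $\phi,\psi$ on $\overline{\mathbb{D}^n}$ with dominated convergence then give $\phi\circ\varphi_{z_m}\to\phi_\infty$ and $\psi\circ\varphi_{z_m}\to\psi_\infty$ in $L^2(\mathbb{D}^n)$, where $\phi_\infty,\psi_\infty$ are constant in the coordinates $w_S$ and equal to $\phi,\psi$ restricted to the boundary face $F_S=\{w:w_j=p_j\ \text{for}\ j\in S\}\cong\mathbb{D}^{n-|S|}$, precomposed with M\"obius maps in the free coordinates.

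The crux is the following key lemma: if $\phi,\psi\in C^1(\overline{\mathbb{D}^m})$ satisfy that for every holomorphic $g:\mathbb{D}\to\mathbb{D}^m$ at least one of $\phi\circ g$ and $\psi\circ g$ is holomorphic, then one of $\phi,\psi$ is holomorphic on all of $\mathbb{D}^m$. The pointwise argument uses the disks $g(\zeta)=p+\epsilon v\zeta$ for small $\epsilon$: the hypothesis forces the union of the kernels of the two complex-linear forms $v\mapsto\overline{\partial}\phi(p)\cdot\overline{v}$ and $v\mapsto\overline{\partial}\psi(p)\cdot\overline{v}$ to be all of $\mathbb{C}^m$, and since $\mathbb{C}^m$ is never a union of two proper complex subspaces, $\overline{\partial}\phi(p)=0$ or $\overline{\partial}\psi(p)=0$ at every $p$. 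Globally, if neither $\phi$ nor $\psi$ is holomorphic, pick $p_1,p_2$ with $\overline{\partial}\phi(p_1)\neq 0$ and $\overline{\partial}\psi(p_2)\neq 0$ and use Hermite interpolation to construct a holomorphic $g:\mathbb{D}\to\mathbb{D}^m$ with $g(0)=p_1$, $g(1/2)=p_2$, and $g'(0),g'(1/2)$ chosen outside the respective kernels (an open dense condition, realizable by convexity of $\mathbb{D}^m$ and the freedom in the interpolation); then neither $\phi\circ g$ nor $\psi\circ g$ is holomorphic, contradicting the hypothesis. Applied to the sub-polydisk $F_S$, on which the hypothesis of Theorem~\ref{ThmSufficient} restricts directly (every disk in $F_S$ is a disk in $\partial\mathbb{D}^n$), the lemma produces one of $\phi_\infty,\psi_\infty$ which is holomorphic on $\mathbb{D}^n$.

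If $\phi_\infty$ is the holomorphic one then $(I-P)(\phi\circ\varphi_{z_m})\to 0$ in $L^2$, and the bound $\|H_{\psi}^{*}H_{\phi}k_{z_m}\|\le\|\psi\|_\infty\|(I-P)(\phi\circ\varphi_{z_m})\|_{L^2}$ closes the case. If instead $\psi_\infty$ is holomorphic, a product-convergence argument (using the uniform $L^\infty$ bound on $\psi\circ\varphi_{z_m}$ and the $L^2$-convergence of the other factor) yields
\[
P\bigl(\overline{\psi\circ\varphi_{z_m}}\,(I-P)(\phi\circ\varphi_{z_m})\bigr)\longrightarrow P\bigl(\overline{\psi_\infty}\,(I-P)(\phi_\infty)\bigr)\quad\text{in }L^2,
\]
and the right side vanishes: for any $f\in A^2$, $\psi_\infty f\in A^2$ while $(I-P)(\phi_\infty)\in (A^2)^{\perp}$, so $\langle\overline{\psi_\infty}(I-P)(\phi_\infty),f\rangle=0$. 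A subsequence argument then delivers $\|H_{\psi}^{*}H_{\phi}k_{z}\|\to 0$ as $z\to\partial\mathbb{D}^n$. The main obstacle is not the boundary analysis (which proceeds cleanly from the key lemma), but the final implication from $\|Tk_z\|\to 0$ to compactness: this is the polydisk analogue of the Su\'arez-type theorem on the Toeplitz algebra, which must be invoked or specialized to operators of the form $T_{\overline{\psi}\phi}-T_{\overline{\psi}}T_\phi$.
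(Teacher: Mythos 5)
Your proposal is correct and follows essentially the same route as the paper: localize at a boundary face of the polydisk, apply the key lemma (your key lemma is precisely the paper's Lemma \ref{Lem1}, with the same pointwise argument and a mild variant of its two-point interpolation for the global step) to conclude that one of the two symbols restricted to that face is holomorphic, and finish with a Berezin-transform compactness criterion. The final step you flag as the main obstacle is exactly the Axler--Zheng/Engli\v{s}/Choe--Koo--Lee result the paper cites for operators such as $H_{\psi}^*H_{\phi}=T_{\overline{\psi}\phi}-T_{\overline{\psi}}T_{\phi}$, and since $\|Tk_z\|\to 0$ implies $\langle Tk_z,k_z\rangle\to 0$ you only need its Berezin-transform form, so no gap remains.
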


We also would like to note that the sufficient condition in Theorem
\ref{ThmSufficient} is not necessary. For example, Thereom \ref{ThmPolydisk}
 implies that  $H_{\phi}^*H_{\psi}$ is compact  on $A^2(\mathbb{D}^3)$ for $\phi
(z_1,z_2,z_3)=\overline{z}_1z_2$ and $\psi (z_1,z_2,z_3)=z_1\overline{z}_2.$
However, $\phi(\xi,\xi,z_3) =\psi(\xi,\xi,z_3)=|\xi|^2$ is not holomorphic. 

Our technique can also be applied to some other product domains. 
\begin{theorem}\label{ThmAnnulus} 
Let $\D=U\times V\subset \C^2$ where $U$ and $V$ are annuli or disks in $\C,$
and the symbols $\phi,\psi\in C(\Dc).$ Assume that the restrictions of $\phi$
and $\psi$ on any disk or annulus in the boundary of $\D$ are of the form $f
+\overline g,$ where $f$ and $g$ are holomorphic and continuous up to the
boundary. If $H^*_{\psi}H_{\phi}$ is compact on $A^2(\D)$ then for any
holomorphic function $g:\mathbb{D}\to \partial\D$ either $\phi\circ g$ or
$\psi\circ g$ is holomorphic.
\end{theorem}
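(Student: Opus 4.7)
The plan is to prove the contrapositive. Suppose $g: \mathbb{D} \to \partial\Omega$ is non-constant holomorphic with neither $\phi\circ g$ nor $\psi\circ g$ holomorphic; the goal is to show $H^*_\psi H_\phi$ is not compact. Since $\partial\Omega = (\partial U \times \overline{V}) \cup (\overline{U} \times \partial V)$ consists of pieces of the form ``circle $\times$ disk/annulus,'' and a bounded holomorphic map from $\mathbb{D}$ into a circle is constant, a brief argument using connectedness of $g(\mathbb{D})$ shows that, after relabeling, $g(\zeta) = (a, h(\zeta))$ with $a \in \partial U$ fixed and $h: \mathbb{D} \to V$ non-constant. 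By hypothesis one can write $\phi(a, w) = F_\phi(w) + \overline{G_\phi(w)}$ and $\psi(a, w) = F_\psi(w) + \overline{G_\psi(w)}$ on the face $\{a\}\times V$, with all four functions holomorphic on $V$ and continuous up to the boundary; the non-holomorphicity of $\phi\circ g$ and $\psi\circ g$ forces both $G_\phi$ and $G_\psi$ to be non-constant on $V$.

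Next, I would exploit the tensor structure $k^\Omega_{(z,w)} = k^U_z \otimes k^V_w$ together with $P^\Omega(k^U_z \otimes h) = k^U_z \otimes P^V(h)$ for $h \in L^2(V)$. Fix $w_0 \in V$ and let $z_n \to a$; since $k^\Omega_{(z_n, w_0)} \rightharpoonup 0$ in $A^2(\Omega)$ and $H^*_\psi H_\phi$ is compact, $\langle H^\Omega_\phi k^\Omega_{(z_n,w_0)}, H^\Omega_\psi k^\Omega_{(z_n,w_0)}\rangle \to 0$. Splitting $\phi(z', w') = \phi(a, w') + [\phi(z', w') - \phi(a, w')]$ and using uniform continuity of $\phi$ on $\overline{\Omega}$ together with concentration of $|k^U_{z_n}|^2 dA$ at $a$, the difference term vanishes in the limit. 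Since $H^V$ kills the holomorphic parts $F_\phi, F_\psi$, and the eigen-identity $T^V_{\overline{G}} k^V_w = \overline{G(w)}\, k^V_w$ for holomorphic $G$ gives $H^V_{\overline{G}} k^V_w = (\overline{G} - \overline{G(w)})\, k^V_w$, the limit reduces to
\[
\langle H^V_{\overline{G_\phi}} k^V_{w_0}, H^V_{\overline{G_\psi}} k^V_{w_0}\rangle_V = \widetilde{\overline{G_\phi}\, G_\psi}(w_0) - \overline{G_\phi(w_0)}\, G_\psi(w_0),
\]
where $\widetilde{\cdot}$ denotes the Berezin transform on $V$. Compactness therefore forces $\widetilde{\overline{G_\phi}\, G_\psi} \equiv \overline{G_\phi}\, G_\psi$ on $V$.

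Finally, I would derive a contradiction from this identity. The bounded continuous function $F := \overline{G_\phi}\, G_\psi$ is a Berezin fixed point on $V$; on the disk the classical Ahern--Flores--Rudin characterization forces $F$ to be harmonic, and a parallel argument applies on the annulus. Computing $\Delta F = 4\, \overline{G_\phi'}\, G_\psi'$, harmonicity yields $G_\phi' \cdot G_\psi' \equiv 0$ on the connected set $V$, so the identity theorem for holomorphic functions forces $G_\phi$ or $G_\psi$ to be constant, contradicting the non-constancy established in the first paragraph. The main obstacle is exactly the Berezin fixed-point step on the annulus, where the rigid automorphism group (rotations and a single involution) blocks the M\"obius-change-of-variables proof that works on the disk. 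If that characterization is not directly accessible, I would replace it by Taylor-expanding $\widetilde F - F$ about an interior point of $V$ using the explicit Laurent series for the annulus Bergman kernel and extracting $\Delta F \equiv 0$ from the leading-order comparison.
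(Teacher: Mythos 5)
Your reduction is essentially the paper's: you localize with the product kernels $k^U_{z_n}\otimes k^V_{w_0}$, kill the part of the symbols vanishing on the face $\{a\}\times V$ by uniform continuity, discard the holomorphic parts $F_\phi,F_\psi$, and arrive at the one-variable Berezin fixed-point identity $B_V\bigl(\overline{G_\phi}\,G_\psi\bigr)=\overline{G_\phi}\,G_\psi$ on $V$ with $G_\phi,G_\psi$ holomorphic and non-constant. Up to that point the argument is sound, and when $V$ is a disk the final rigidity step is indeed classical (Ding--Tang, or the one-dimensional case of the fixed-point theorem you cite).

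The gap is exactly where you flag it, and your proposed fallback does not close it. On an annulus the equation $B(F)=F$ does \emph{not} force $\Delta F\equiv 0$, at leading order or otherwise: what one actually gets (a theorem of {\v{C}}u{\v{c}}kovi{\'c}, Michigan Math.\ J.\ 43 (1996), Theorem 9, which is what the paper invokes) is $F=R(F)+h$ with $h$ harmonic and $R$ the radialization operator, i.e.\ only that $\Delta F=4\,\overline{G_\phi'}\,G_\psi'$ is \emph{radial}, not zero. A Laurent-series computation then shows that either one of $G_\phi,G_\psi$ is constant or else $G_\phi=az^m$, $G_\psi=bz^m$ for a single integer $m\neq 0$, so that $F=\overline a\,b\,|z|^{2m}$ --- a genuinely non-harmonic radial candidate that survives every Laplacian-based test. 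Ruling it out, i.e.\ proving $B(|z|^{2m})\neq|z|^{2m}$ on the annulus for $m\neq 0$, is the content of the second half of the paper's Proposition \ref{Prop1}: it requires the explicit Laurent expansion of the annulus Bergman kernel, equating coefficients of $|w|^{2k}$, and a delicate monotonicity argument for the resulting one-parameter family of functions $f_l$. Without this step (or some substitute for it), your concluding contradiction ``$G_\phi'G_\psi'\equiv 0$'' is not established in the annulus case --- which is the only case not already covered by Theorem \ref{ThmPolydisk}, and hence the whole point of Theorem \ref{ThmAnnulus}.
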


Commutators of Toeplitz operators are connected to products of Hankel operators
as  follows: 
\[[T_{\phi},T_{\psi}]=
H_{\overline{\psi}}^*H_{\phi}-H_{\overline{\phi}}^*H_{\psi}.\]
Hence, Theorem \ref{ThmSufficient} implies the following corollary.
\begin{corollary}
 Let $\mathbb{D}^n$ be the polydisk in $\C^n$ and the symbols $\phi,\psi\in
C^1(\overline{\mathbb{D}^n})$ be non-constant. Assume that for any holomorphic
function $g:\mathbb{D}\to \partial\mathbb{D}^n,$ either $\phi\circ g$ and
$\psi\circ g$ are holomorphic or $\overline{\phi}\circ g$ and
$\overline{\psi}\circ g$ are holomorphic. Then $[T_{\phi},T_{\psi}]$ is compact
on $A^2(\mathbb{D}^n).$ 
\end{corollary}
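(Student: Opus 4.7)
The plan is to exploit the identity
\[
[T_\phi,T_\psi] = H_{\overline\psi}^* H_\phi - H_{\overline\phi}^* H_\psi,
\]
which is stated in the excerpt just before the corollary. Since the sum (difference) of two compact operators is compact, it suffices to show that both $H_{\overline\psi}^* H_\phi$ and $H_{\overline\phi}^* H_\psi$ are compact on $A^2(\mathbb{D}^n)$. Each of these is a product of the form covered by Theorem~\ref{ThmSufficient}, so the task reduces to verifying, for any holomorphic disk $g:\mathbb{D}\to\partial\mathbb{D}^n$, the dichotomy required by that theorem.

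First I would note that $\phi,\psi\in C^1(\overline{\mathbb{D}^n})$ implies $\overline\phi,\overline\psi\in C^1(\overline{\mathbb{D}^n})$, so the regularity hypothesis of Theorem~\ref{ThmSufficient} is satisfied for each product. Next, I would fix an arbitrary holomorphic $g:\mathbb{D}\to\partial\mathbb{D}^n$ and split into the two cases provided by the hypothesis. In the first case, $\phi\circ g$ and $\psi\circ g$ are both holomorphic; then $\phi\circ g$ being holomorphic supplies the dichotomy for the pair $(\overline\psi,\phi)$ giving compactness of $H_{\overline\psi}^* H_\phi$, while $\psi\circ g$ being holomorphic supplies the dichotomy for $(\overline\phi,\psi)$ giving compactness of $H_{\overline\phi}^* H_\psi$. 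In the second case, $\overline\phi\circ g$ and $\overline\psi\circ g$ are both holomorphic; now $\overline\psi\circ g$ being holomorphic handles $(\overline\psi,\phi)$, and $\overline\phi\circ g$ handles $(\overline\phi,\psi)$. Either way, both products satisfy the hypotheses of Theorem~\ref{ThmSufficient}, hence both are compact, and so is their difference $[T_\phi,T_\psi]$.

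There is essentially no real obstacle here; the only subtlety is keeping track of which symbol has to be holomorphic along $g$ for each of the two Hankel products. The non-constancy assumption plays no role in the compactness argument itself and can simply be carried along. If one wanted a slightly cleaner presentation, I would state as an intermediate observation the symmetric fact that Theorem~\ref{ThmSufficient} applied to $(\overline\psi,\phi)$ and $(\overline\phi,\psi)$ needs, respectively, holomorphicity of one of $\{\overline\psi\circ g,\phi\circ g\}$ and one of $\{\overline\phi\circ g,\psi\circ g\}$, and then verify both requirements are met in each of the two cases of the hypothesis.
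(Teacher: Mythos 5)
Your proof is correct and takes essentially the same route as the paper, which deduces the corollary directly from the identity $[T_{\phi},T_{\psi}]=H_{\overline{\psi}}^*H_{\phi}-H_{\overline{\phi}}^*H_{\psi}$ together with Theorem \ref{ThmSufficient}. Your case-by-case check of which composition is holomorphic for each of the two Hankel products is precisely the (unwritten) content of the paper's one-line deduction, and your observation that the non-constancy hypothesis is not needed for compactness is accurate.
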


\section*{Proof of Theorems \ref{ThmPolydisk} and \ref{ThmSufficient}}

One of the important tools we need is the Berezin transform of an integrable
function $f$ on the polydisk in $\C^n$ which is defined as
$B(f)(z)= \int_{\mathbb{D}^n} f(w) |k^n_z(w)|^2 dV(w).$ Here $k^n_z(w)$  denotes
the normalized Bergman kernel of $\mathbb{D}^n$. More generally, the Berezin
transform of a bounded operator $T$ is defined as $B(T)(z)=\langle
Tk^n_z,k^n_z \rangle_{L^2(\mathbb{D}^n)}.$

\begin{proof}[Proof of Theorem \ref{ThmPolydisk}] 
 We will use the fact that if  an operator $T$ is compact then $\langle
Tf_j,f_j\rangle_{L^2(\mathbb{D}^n)}$ converges to zero whenever $\{f_j\}$
converges to zero weakly.
Let us asume that $H_{\psi}^*H_{\phi}$ is  compact and $\phi|_{D(z_0,j)}$ and
$\psi|_{D(z_0,j)}$ are pluriharmonic for all $1\leq j\leq n$ and $|z_0|=1.$
Without loss of generality let us choose $j=n$ and let us denote $z=(z',z_n)$
where $z'=(z_1,\ldots,z_{n-1})$ and define 
$\phi_{0}(z)=\phi(z)-\phi(z',z_{0}), \psi_{0}(z)=\psi(z)-\psi(z',z_{0}),$ and
denote $\psi_{z_0}(z)=\psi_{1}(z')=\psi(z',z_{0})$ and
$\phi_{z_0}(z)=\phi_{1}(z')=\phi(z',z_{0}).$ Let us fix 
$F\in A^2(\mathbb{D}^{n-1})$ with $\|F\|_{L^2(\mathbb{D}^{n-1})}\leq 1$ and
choose  a sequence $\{p_j\}\subset \mathbb{D}$ such that $p_j\to z_0.$ 
Now we define   $f_{j}(z)=F(z')k_{p_j}(z_{n})$ where $k_{p_j}$ is the
normalized Bergman kernel for $\mathbb{D} $ at $p_j.$ 
 We note that $\phi_{0}(z',z_{0})=\psi_{0}(z',z_{0})=0$
for all $z'\in \mathbb{D}^{n-1}$ and for all $\delta >0$ the sequence
$\{\|f_{j}\|_{L^2(\mathbb{D}^n\setminus\mathbb{D}^n_{z_{0},\delta})}\}$
converges to zero, where 
$\mathbb{D}^n_{z_{0},\delta}=\{z\in \mathbb{D}^n: |z_n-z_{0}|<\delta\}.$ 
Then for $\delta> 0$ one can show that  
\begin{eqnarray*}
 \|\phi_0f_{j}\|^{2}_{L^{2}(\mathbb{D}^n)}+\|\psi_{0}f_{j}\|^{2}_{L^{2}(\mathbb{
D}^n)} &\leq& \sup
\{|\phi_{0}(z)|^{2}:z\in  \mathbb{D}^n_{z_{0},\delta}
\}\|f_{j}\|^{2}_{L^{2}(\mathbb{D}^n)}\\
 && +\sup \{|\psi_{0}(z)|^{2}:z\in
\mathbb{D}^n_{z_{0},\delta}\}\|f_{j}\|^{2}_{L^{2}(\mathbb{D}^n)}
\\
 &&+  \sup \{|\phi_{0}(z)|^{2}:z\in \overline{\mathbb{D}^n}
\}\|f_{j}\|^{2}_{L^{2}(\mathbb{D}^n \setminus
\mathbb{D}^n_{z_{0},\delta})}\\
 && +\sup \{|\psi_{0}(z)|^{2}:z\in \overline{\mathbb{D}^n} \}
\|f_{j}\|^{2}_{L^{2}(\mathbb{D}^n \setminus
\mathbb{D}^n_{z_{0},\delta})}.
 \end{eqnarray*}
For any $\varepsilon>0$ we can choose $\delta>0$ so that 
\[\sup \{|\phi_{0}(z)|^{2}:z\in  \mathbb{D}^n_{z_{0},\delta} \}+\sup
\{|\psi_{0}(z)|^{2}:z\in \mathbb{D}^n_{z_{0},\delta} \}  <\varepsilon/2.\]
Furthermore, we can choose $j_{\varepsilon,\delta}$ so that   
\[\|f_{j}\|^{2}_{L^{2}(\mathbb{D}^n \setminus
\mathbb{D}^n_{z_{0},\delta})}<\frac{\varepsilon}{2\sup
\{|\phi_{0}(z)|^{2}:z\in \overline{\mathbb{D}^n} \}+ 2\sup
\{|\psi_{0}(z)|^{2}:z\in \overline{\mathbb{D}^n} \}+1} \]
for all $j\geq j_{\varepsilon,\delta}.$ Combining the above inequalities with
the fact that
$\|f_{j}\|_{L^{2}(\mathbb{D}^n)}\leq 1 $ we get
$\|\phi_{0}f_{j}\|^{2}_{L^{2}(\mathbb{D}^n)}+\|\psi_{0}f_{j}\|^{2}_{L^{2}
(\mathbb{D}^n)} <
\varepsilon$ for  $j
\geq j_{\varepsilon,\delta}.$ This implies that  
\[\|H_{\phi_{0}}(f_{j})\|_{L^{2}(\mathbb{D}^n)}+\|H_{\psi_{0}}(f_{j})\|_{L^{2}
(\mathbb{D}^n)}\to
0 \text{ as } j\to \infty.\] 
The above statement together with the assumption that
$H^*_{\psi}H_{\phi}$ is compact and $H_{\phi}=H_{\phi_{z_0}}+H_{\phi_{0}}$
and  $H_{\psi}=H_{\psi_{z_0}}+H_{\psi_{0}}$ imply that   
$\langle H_{\phi_{z_0}}(f_{j}),H_{\psi_{z_0}}(f_{j})
\rangle_{L^{2}(\mathbb{D}^n)}$
converges to zero.  Using the fact that $\mathbb{D}^n$ is the polydisk and the
function $\phi_{z_0}$ depends only on $z'$ one can show
that $H_{\phi_{z_0}}(f_{j})(z)=H_{\phi_{1}}(F)(z')k_{p_{j}}(z_n)$ and  
\[ \langle H_{\phi_{z_0}}(f_{j}),
H_{\psi_{z_0}}(f_{j})\rangle_{L^{2}(\mathbb{D}^n)} =
\langle H_{\phi_{1}}(F),H_{\psi_{1}}( F)\rangle_{L^{2}(\mathbb{D}^{n-1})} \|
k_{p_j}\|^{2}_{L^{2}(\mathbb{D})}. \]
Then compactness of  $H^*_{\psi}H_{\phi}$ implies that
 $\langle H_{\phi_{1}}(F), H_{\psi_{1}} (F)\rangle_{L^{2}(\mathbb{D}^{n-1})} =
0$ for all $F\in A^2(\mathbb{D}^{n-1}).$  Now Theorem 2.3 in
\cite{DingTang01} implies that for any $1\leq k\leq n-1$ either $\phi_1$ or
$\psi_1$ is holomorphic in $z_k.$ Therefore, for any $1\leq k\leq n-1$ either
$\phi$ or $\psi$ is holomorphic in $z_k.$

To prove the other direction of the theorem, let $q$ be a boundary point of
$\mathbb{D}^n$ and $k^{n}_{q_j}$ denote the normalized Bergman kernel of
$\mathbb{D}^n$ centered at $q_j\in \mathbb{D}^n$ where $q_j \to q.$ First, we
will show that   
$\langle H_{\phi}k^n_{q_j},H_{\psi}k^n_{q_j}\rangle_{L^2(\mathbb{D}^n)}$ 
converges to zero. Then we will use the fact (\cite{AxlerZheng98,Englis99}, see
also \cite[Theorem 2.1]{ChoeKooLee09}) that $H_{\psi}^*H_{\phi}$ is compact if
and only if $B(H_{\psi}^*H_{\phi}) \in C_0(\mathbb{D}^n)$ where  $C_0(\Omega)$
denotes the class of functions that are continuous on $\Omega$ and have zero
boundary limits. It is easy to see that $B(H_{\psi}^*H_{\phi}) \in
C(\mathbb{D}^n).$ There exists $1\leq j\leq n$ and $\xi\in \C$ such that
$|\xi|=1$ and $q\in D(\xi,j).$ We extend $\psi|_{D(\xi,j)}$ and
$\phi|_{D(\xi,j)}$ trivially in $z_j$ so that the extensions, $\psi_1$ and
$\phi_1$, are independent of $z_j$ variable and are continuous up to the
boundary of $\mathbb{D}^n.$ Let us define
$\phi_0=\phi-\phi_1$ and $\psi_0=\psi-\psi_1.$ Then $\phi_0=\psi_0=0$ on
$D(\xi,j)$  and, as is done in the first part of this proof, one can show that 
both sequences $\{H_{\phi_0}k^n_{q_j}\}$ and $\{H_{\psi_0}k^n_{q_j}\}$ converge to
zero. Since $\phi_1$ and $\psi_1$ are pluriharmonic on $\mathbb{D}^n,$
continuous up to the boundary, and for each variable either $\phi_1$ or
$\psi_1$ is holomorphic, Theorem 2.3 in \cite{DingTang01} implies that  
$H_{\psi_1}^*H_{\phi_1}=0.$ Therefore, $H_{\psi}^*H_{\phi}$ is compact.
\end{proof}

In order to prove  Theorem \ref{ThmSufficient} we need the following
Lemma
  \begin{lemma}\label{Lem1}
Let $U$ be a  domain in $\C^n$ and the functions 
$\phi,\psi\in C^1(U)$ are such that for any holomorphic function 
$g:\mathbb{D}\to U$ either $\phi\circ g$ or $\psi\circ g$  is
holomorphic. Then either $\phi$ or $\psi$ is holomorphic on $U.$
\end{lemma}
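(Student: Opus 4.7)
The plan is to argue by contradiction. Suppose that neither $\phi$ nor $\psi$ is holomorphic on $U$. By continuity of the first-order partial derivatives, the sets $A=\{z\in U:\dbar\phi(z)\neq 0\}$ and $B=\{z\in U:\dbar\psi(z)\neq 0\}$ are open and nonempty. I would first handle the easy sub-case $A\cap B\neq\emptyset$: choose $p\in A\cap B$ and pick $v\in\C^n$ with both $\sum_j\phi_{\bar z_j}(p)\bar v_j\neq 0$ and $\sum_j\psi_{\bar z_j}(p)\bar v_j\neq 0$, which is possible because each equation cuts out a proper subspace and the union of two proper subspaces of $\C^n$ is not all of $\C^n$. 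For $\varepsilon>0$ small, the affine disk $g(\zeta)=p+\varepsilon\zeta v$ maps $\mathbb D$ into $U$, and the chain rule gives $\dbar(\phi\circ g)(0)\neq 0$ and $\dbar(\psi\circ g)(0)\neq 0$, so neither composition is holomorphic, contradicting the hypothesis.

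The substantive sub-case is $A\cap B=\emptyset$. Here I would pick $p\in A$, $q\in B$, and direction vectors $u,w\in\C^n$ with $\sum_j\phi_{\bar z_j}(p)\bar u_j\neq 0$ and $\sum_j\psi_{\bar z_j}(q)\bar w_j\neq 0$. The goal is to produce a single holomorphic map $g:\mathbb D\to U$ with $g(\zeta_0)=p$ and $g(\zeta_1)=q$ for some $\zeta_0,\zeta_1\in\mathbb D$, whose derivatives $g'(\zeta_0)$ and $g'(\zeta_1)$ are nonzero scalar multiples of $u$ and $w$. Once $g$ is in hand, the chain rule shows $\dbar(\phi\circ g)(\zeta_0)\neq 0$ and $\dbar(\psi\circ g)(\zeta_1)\neq 0$, so neither composition is holomorphic, again contradicting the hypothesis.

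To build the disk I would start from a smooth path $\gamma:[0,1]\to U$ joining $p$ to $q$ with $\gamma'(0)=u$ and $\gamma'(1)=w$; its image is compact in $U$ and so sits at some positive distance $\delta$ from $\partial U$. Weierstrass approximation in $C^1$-norm on $[0,1]$, followed by a degree-$3$ Hermite correction that restores the exact endpoint values and derivatives, produces a polynomial map $P:\C\to\C^n$ with $P(0)=p$, $P(1)=q$, $P'(0)=u$, $P'(1)=w$, and $\sup_{t\in[0,1]}|P(t)-\gamma(t)|<\delta/2$. Hence $P([0,1])\subset U$, and by continuity of $P$ there is a simply connected open neighborhood $V$ of $[0,1]$ in $\C$ with $P(V)\subset U$; composing $P$ with a Riemann map $\mathbb D\to V$ yields the desired $g$.

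The main obstacle I anticipate is precisely this two-point interpolation: producing a polynomial that is uniformly close to $\gamma$ on $[0,1]$ and simultaneously matches the prescribed values and first derivatives at both endpoints, so that the resulting map stays inside $U$. Everything else is routine: the chain rule turns ``$\phi\circ g$ not holomorphic'' into the non-vanishing of a single complex-linear functional in the direction $g'$, and two such nonzero functionals leave an open dense set of admissible directions in $\C^n$.
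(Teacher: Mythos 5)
Your proof is correct, but it takes a genuinely different route from the paper's in the main step. The paper also joins $p$ (where $\dbar\phi\neq 0$) to $q$ (where $\dbar\psi\neq 0$) by a polynomial curve whose real segment $[0,1]$ lies in $U$, but it only matches the \emph{values} at the endpoints; to get information about $\dbar\phi(p)$ and $\dbar\psi(q)$ it then builds a family of $2n-1$ perturbed disks $g_{j}=\bigl(f+\tfrac{z(z-1)}{M}E_j\bigr)\circ h$, all passing through both $p$ and $q$, with the $E_j$ chosen (via a Vandermonde matrix) so that any $n$ of the derivative vectors at the preimages of $p$ and of $q$ are linearly independent. A pigeonhole argument then forces, say, $\phi\circ g_{j_k}$ to be holomorphic for $n$ independent directions, whence $\dbar\phi(p)=\dbar\phi(q)=0$, a contradiction. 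You instead prescribe the full first-order jet of the curve at both endpoints -- $P(0)=p$, $P(1)=q$, $P'(0)=u$, $P'(1)=w$ with $u,w$ chosen adversarially against $\dbar\phi(p)$ and $\dbar\psi(q)$ -- via a $C^1$ Weierstrass approximation plus a cubic Hermite correction, so that a \emph{single} disk $g=P\circ h$ already violates the hypothesis: $\dbar(\phi\circ g)(\zeta_0)=\overline{h'(\zeta_0)}\sum_j\phi_{\bar z_j}(p)\bar u_j\neq 0$ and likewise for $\psi$ at $\zeta_1$. Your separate treatment of the case $A\cap B\neq\emptyset$ (a vector avoiding two proper subspaces) is a cleaner version of the paper's brief ``affine disks along the $E_j$'' remark for $p=q$. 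What your approach buys is the elimination of the pigeonhole and the Vandermonde independence argument, at the modest cost of the Hermite interpolation step (which you correctly flag as the one point needing care: the correction stays small precisely because the approximation is taken in $C^1$ norm, so the endpoint discrepancies in value and derivative are small). Both arguments are sound; yours is somewhat more economical.
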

\begin{proof} Let  $p,q\in U$ such that $\dbar\phi(p)\neq 0$ and 
$\dbar \psi(q)\neq 0.$ Assume that $p\neq q.$ Let $\varepsilon>0$ and 
$\gamma:[0,1]\to U$ be a curve so that $\gamma(0)=p,\gamma(1)=q,$ and 
\[\{z\in \C^n:\text{dist}(z,\gamma)<\varepsilon\}\subset U\] 
where $\text{dist} $ denotes the Euclidean distance. Using Stone-Weierstrass
Theorem we choose a complex-valued (real) polynomial $P:\mathbb{R}\to\C^n$ so
that $|P(x)-\gamma(x)|<\varepsilon/4$ for all $x\in [0,1].$ Let us define 
\[f(x)= P(x)+ x(q-P(1))+(1-x)(p-P(0)).\]
The function $f$ has a holomorphic  extension to $\C$ and we will denote the
extension by $f$ as well. Hence,  $f:\C\to \C^n$ is holomorphic such that
$f(0)=p,f(1)=q,$ and $f(z)\subset U$ for  
$z\in L=\{z\in\mathbb{R}:0\leq z\leq 1\}.$

Let $\{e_1,e_2,\ldots, e_n\}$ denote the standard basis in
$\C^n,$ and define $E_j=e_j$ for $1\leq j\leq n$ and
$E_{n+j}=\sum_{k=1}^nk^{j-1}e_k$ for $1\leq j\leq n-1.$ 
 Using Vandermonde matrix one can show that the set 
$\{E_{j_1},E_{j_2},\ldots,E_{j_n}\}$ is linearly independent for any $1\leq
j_1<j_2<\cdots <j_n\leq 2n-1.$  

Let $M>0$ and define
\[g_{j,M}(z)=f(z)+\frac{z(z-1)}{M}E_j.\] 
Let us fix $M>0$ large enough so that  $g_{j,M}(z)\in U$ for $z\in L.$  Then
there exists a simply connected neighborhood $V$ of $L$ such that 
$g_{j,M}(z)\in U$ for $z\in V.$
We choose a conformal mapping $h:\mathbb{D}\to V$ and define 
$g_j= g_{j,M}\circ h.$ Then $g_j:\mathbb{D}\to U$ for $1\leq j\leq
2n-1,$  and the sets  $\{g_{j_1}'(z_0),g_{j_2}'(z_0),\ldots,g_{j_n}'(z_0)\}$
and  $\{g_{j_1}'(z_1),g_{j_2}'(z_1),\ldots,g_{j_n}'(z_1)\}$ are  linearly
independent for $h(z_0)=0,h(z_1)=1$ and  any $1\leq j_1<j_2<\cdots<j_n\leq
2n-1.$ Since for any $j,$ either $\phi\circ g_j$ or $\psi\circ g_j$ is
holomorphic, there exist $1\leq j_1<j_2 \cdots <j_n\leq 2n-1$ such that either
$\phi\circ g_{j_k}$ is holomorphic for $1\leq k\leq n$ or $\psi\circ g_{j_k}$ is
holomorphic for $1\leq k\leq n.$ Furthermore, using the chain rule
together with linear independence of $\{g_{j_k}'(z_0):1\leq k\leq n\}$ and
$\{g_{j_k}'(z_1):1\leq k\leq n\}$ one can show that either 
$\dbar \phi (p)=\dbar\phi(q)=0$ or $\dbar \psi (p)=\dbar\psi(q)=0.$ 

If $p=q$ then one can use affine disks along $E_j$'s to show that  either
$\dbar\phi(p)=0$ or $\dbar\psi(p)=0.$   Hence, we reached a contradiction
completing the proof. 
\end{proof}

\begin{proof}[Proof of Theorem \ref{ThmSufficient}]
We will use Lemma \ref{Lem1} together with the  ideas in the  second part
of the proof  of Theorem \ref{ThmPolydisk}. For any $|\xi|=1$ and 
$1\leq j\leq n$ we decompose the symbols as $\phi=\phi_0+\phi_1$ and
$\psi=\psi_0+\psi_1$ such that 
\begin{itemize}
 \item[i.] $\phi_0=\psi_0=0$ on $D(\xi,j),$
\item[ii.] $\phi_1 |_{D(\xi,j)}=\phi|_{D(\xi,j)},\psi_1 |_{D(\xi,j)} =
\psi|_{D(\xi,j)},$  
\item[iii.] $\phi_1$ and $\psi_1$ are continuous on $\overline{\mathbb{D}^n},$
\item[iv.]  either $\phi_1$ or $\psi_1$ is holomorphic on $\mathbb{D}^n.$ 
\end{itemize}
Then either  $H_{\phi_1}=0$ or $H_{\psi_1}=0$ and both
sequences $\{H_{\phi_0}k_{q_j}\}$ and $\{H_{\psi_0}k_{q_j}\}$ converge to 0
in $L^2(\mathbb{D}^n)$ for  $q_j\to q\in D(\xi,j).$  Hence, 
$B(H_{\psi}^*H_{\phi}) \in C_0(\mathbb{D}^n)$ and in turn this implies that
$H_{\psi}^*H_{\phi}$
is compact. 
\end{proof}

\section*{Proof of Theorem \ref{ThmAnnulus}}
To prove Theorem \ref{ThmAnnulus} one reduces the problem onto
$U$ or $V$ as in the first part of the  proof of Theorem \ref{ThmPolydisk}.
Then if the problem is reduced onto an annulus one uses the following
Proposition instead of Ding and Tang's Theorem.

\begin{proposition} \label{Prop1}
 Let $\mathcal{A}=\{z\in \C:0<r<|z|<R\}$ and $\phi$ and $\psi$ be holomorphic on
$\mathcal{A}$ and continuous on $\Dc.$ Assume that
$B(\overline\psi\phi)=\overline\psi\phi.$ Then  either $\phi$ or $\psi$ is
constant.
 \end{proposition}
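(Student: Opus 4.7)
The plan is to convert the Berezin fixed-point hypothesis into an operator identity for a product of Hankel operators, and then to extract constancy from the Laurent structure of $A^2(\mathcal{A})$. Since $\phi$ and $\psi$ are holomorphic on $\mathcal{A}$, a short calculation with the reproducing property gives $P(\overline\psi k_z) = \overline{\psi(z)} k_z$, so that $H_{\overline\psi} k_z = (\overline\psi - \overline{\psi(z)}) k_z$, and likewise for $\phi$. Expanding $B(\overline\psi\phi)(z) = \int \overline\psi\phi\,|k_z|^2\, dV$ by writing each factor as its value at $z$ plus an increment, and using $B(\phi)(z) = \phi(z)$ for holomorphic $\phi$, one obtains
\[
B(\overline\psi\phi)(z) - \overline{\psi(z)}\phi(z) = \int_{\mathcal{A}} (\phi(w) - \phi(z))\overline{(\psi(w) - \psi(z))}|k_z(w)|^2\, dV(w) = \overline{\langle H_{\overline\phi} k_z, H_{\overline\psi} k_z\rangle}.
\]
Hence the hypothesis is equivalent to $B(H_{\overline\psi}^* H_{\overline\phi})(z) \equiv 0$ on $\mathcal{A}$.

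Next I apply the standard injectivity of the Berezin transform on bounded operators: the function $(z,w) \mapsto \langle H_{\overline\psi}^* H_{\overline\phi} K_w, K_z\rangle$ is holomorphic in $z$ and antiholomorphic in $w$, and by the previous step vanishes on the totally real diagonal $\{w = z\}$. The uniqueness principle for sesqui-analytic functions then forces it to vanish on all of $\mathcal{A} \times \mathcal{A}$; since $\{K_w : w \in \mathcal{A}\}$ is total in $A^2(\mathcal{A})$, this yields the operator identity $H_{\overline\psi}^* H_{\overline\phi} = 0$ on $A^2(\mathcal{A})$.

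I then unpack this identity in Laurent coordinates. The monomials $\{w^n\}_{n \in \mathbb{Z}}$ form an orthogonal basis of $A^2(\mathcal{A})$ with $c_n^2 := \|w^n\|^2 = \pi(R^{2n+2} - r^{2n+2})/(n+1)$ for $n \neq -1$ and $c_{-1}^2 = 2\pi\log(R/r)$. A direct projection computation gives $P(\overline{w}^j w^n) = (c_n^2/c_{n-j}^2)\, w^{n-j}$, and writing $\phi = \sum a_n w^n$, $\psi = \sum b_n w^n$ turns the vanishing of every inner product $\langle H_{\overline\phi}(w^n), H_{\overline\psi}(w^m)\rangle$ into an infinite system of scalar equations in the products $\overline{a_j}\,b_k$ of Laurent coefficients.

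Finally I exploit the strict log-convexity of $\{c_n^2\}_{n \in \mathbb{Z}}$ on the annulus: since the sequence is not log-affine once $r > 0$, direct checking from the formula gives $c_{n-j}^2 c_{n+j}^2 > c_n^4$ for every $j \neq 0$. Consequently the weights appearing in the system have definite sign outside of two distinguished ``kernel'' diagonals, and examining the asymptotics as $p \to \pm\infty$ (with $c_k^2 \sim \pi R^{2k+2}/(k+1)$ as $k \to +\infty$ and the analogous expression governed by $r$ as $k \to -\infty$) isolates the top and bottom Laurent modes of $\phi$ and $\psi$ in turn. This forces $\overline{a_j}\,b_k = 0$ whenever both $j \neq 0$ and $k \neq 0$, which is precisely the statement that $\phi$ or $\psi$ is constant. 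I expect the main obstacle to be this final step: converting the strict positivity of the coefficients into a clean rigidity statement on complex Laurent data requires tracking both ends of the Laurent expansion, which has no counterpart on the disk where negative modes are absent.
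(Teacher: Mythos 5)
Your reduction is correct and takes a genuinely different route from the paper's: the identity $B(\overline\psi\phi)(z)-\overline{\psi(z)}\phi(z)=\overline{\langle H_{\overline\phi}k_z,H_{\overline\psi}k_z\rangle}$ together with the injectivity of the Berezin transform (sesqui-analyticity of $\langle TK_w,K_z\rangle$ plus vanishing on the diagonal) does yield the operator identity $H_{\overline\psi}^*H_{\overline\phi}=0$, and your Laurent-mode bookkeeping, including the formulas for $c_n^2$ and for $P(\overline{w}^jw^n)$, is set up correctly. The paper instead invokes Theorem 9 of \cite{Cuckovic96} to show $\overline{\psi'}\phi'$ is radial, deduces that (up to constants) $\phi=az^m$ and $\psi=bz^m$ for a single $m$, and then rules out $m\neq0$ by an explicit asymptotic computation with the annulus Bergman kernel. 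Your route avoids that external radialization result, which is a real advantage in principle.

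The gap is in your final step, exactly where you flagged it, and it is not a small one. For the equation indexed by $(n,m)$ with $d=n-m$, the weight attached to $\overline{a_j}b_{j-d}$ is $c_{j+m}^2-c_n^2c_m^2/c_{n-j}^2=\bigl(c_{j+m}^2c_{n-j}^2-c_n^2c_m^2\bigr)/c_{n-j}^2$. The index pairs $(j+m,\,n-j)$ and $(n,m)$ have the same sum $2m+d$ but spreads $|2j-d|$ and $|d|$, so strict log-convexity gives the weight the sign of $|2j-d|-|d|$, i.e., of $j(j-d)=jk$. Hence the weights do vanish on the two diagonals $j=0$ and $k=0$, but they are \emph{not} of definite sign elsewhere: they are positive when $j$ and $k$ have the same sign and negative when the signs differ. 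Since the unknowns $\overline{a_j}b_k$ are complex, neither positivity nor this sign pattern forces termwise vanishing, and the asymptotic argument you gesture at (after normalizing by $c_n^2c_m^2$ the weights behave like $j(j-d)/m^2$ as $m\to+\infty$, so one must extract information from a vanishing limit of an infinite sum, justifying the interchange) is not carried out. That endgame is the entire content of the proposition --- it plays the role of the hard explicit computation occupying the second half of the paper's proof --- so as written your argument establishes $H_{\overline\psi}^*H_{\overline\phi}=0$ but not that $\phi$ or $\psi$ is constant.
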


\begin{proof} Let us assume that $B(\overline\psi\phi)=\overline\psi\phi.$ Then
by a result of \u{C}u\u{c}kovi\'c \cite[Theorem 9]{Cuckovic96}
$B(\overline\psi\phi)=\overline\psi\phi$ implies that 
\begin{align}\label{CuckovicResult}
\overline\psi\phi=R(\overline\psi\phi)+h
\end{align} 
where $h$ is a harmonic function and the radialization operator $R$ is defined
as $R(k)(z)=(2\pi)^{-1}\int_0^{2\pi} k(ze^{i\theta})d\theta.$ If we apply the
Laplacian to \eqref{CuckovicResult} we get $\overline\psi' \phi'=R(\Delta 
(\overline\psi\phi)). $ Hence, $\overline\psi'\phi'$ is radial. Let
$\phi'(z)=\sum_{n=-\infty}^{\infty}a_nz^n$
and $\psi'(z)=\sum_{m=-\infty}^{\infty}b_mz^m.$ Then, on one hand 
\[\overline\psi'(z)\phi'(z)=\sum_{n,m=-\infty}^{\infty}a_n\overline
b_mr^{n+m}e^{i(n-m)\xi} \]
where $z=re^{i\xi}.$  On the other hand, since $\overline\psi'\phi'$  is a
radial function we get 
\begin{align*}
 \overline\psi'(z)\phi'(z)&=
\frac{1}{2\pi}\int_0^{2\pi}\overline\psi'(ze^{i\theta})\phi'(ze^{i\theta}
)d\theta\\
&=\frac{1}{2\pi} \sum_{n,m=-\infty}^{\infty}a_n\overline b_mr^{n+m}
e^{i(n-m)\xi}\int_0^{2\pi}e^{i(n-m)\theta}d\theta\\
&= \sum_{n=-\infty}^{\infty}\frac{a_n\overline b_nr^{2n}}{2\pi}.
\end{align*}
Hence, $\sum_{n\neq m}a_n\overline b_mr^{n+m}e^{i(n-m)\xi}=0$ for all $\xi.$ We
can rewrite the last equation as 
\[\sum _{k\neq 0} \left( \sum_{m=-\infty}^{\infty}  a_{m+k}\overline
b_mr^{2m+k}\right) e^{ik\xi}=0.\]
This is a Fourier series that is equal to zero. Hence 
$\sum_{m=-\infty}^{\infty}  a_{m+k}\overline b_mr^{2m+k}=0$ for all $k\neq 0.$ 
This is a Laurent series that is equal to zero. Therefore, 
$a_{m+k}\overline b_m=0$ for all $k\neq 0$ and all $m.$ In return this implies
that if $b_{m_0}\neq 0$ then $a_m=0$ for $m\neq m_0.$ That is, either there
exists an integer $m$ and two nonzero constants $a$ and $b$ such that
$\phi(z)=az^m$ and $\psi(z)=bz^m$ or either $\phi$ and $\psi$ is
constant. Next we will show that in the first case $m=0.$
Recall that the Bergman kernel for the annulus $\{z\in \C: \rho<|z|<1\}$ is 
\[K_w(z)=\frac{1}{\pi}\sum_{-\infty}^{\infty}
\frac{n+1}{1-\rho^{2n+2}}(\overline{w}z)^n-\frac{1}{2\pi\ln\rho}(\overline{w}
z)^{-1}. \]
Without loss of generality we may assume
that $R=1$ and $r=\rho <1$ for a
fixed $m$ we have $B(|z|^{2m})(w)=|w|^{2m}.$ Then 
\[|w|^{2m}
\|K_w\|^2=\int_{\rho}^1\int_0^{2\pi}r^{2m+1}|K_w(re^{i\theta})|^2d\theta
dr.\]
The last equation can be expanded as 
\begin{multline*} 
-\frac{|w|^{ 2m-2 } } {2\pi\ln \rho}+\sum_{n\neq
-1}\frac{(n+1)|w|^{2(m+n)}}{1-\rho^{2n+2}} = \frac{\pi (1-\rho^{2m})}{|w|^2
m (2\pi  \ln \rho )^2 |w|^2} \\ 
+ \sum_{n\neq -1}\frac{(n+1)^2|w|^{2n}}{(1-\rho^{2n+2})^2}
\frac{(1-\rho^{2(n+m+1)})}{2(n+m+1)} 
\end{multline*}
Now let $k=m+n$ then $n=k-m$ and the last equation becomes
\begin{multline*} 
-\frac{|w|^{ 2m-2 } } {2\pi\ln \rho}+\sum_{k\neq m
-1}\frac{(k-m+1)|w|^{2k}}{1-\rho^{2(k-m+1)}} = \frac{\pi
(1-\rho^{2m})}{|w|^2m (2\pi\ln \rho )^2 |w|^2} \\
+ \sum_{n\neq
-1}\frac{(n+1)^2|w|^{2n}}{(1-\rho^{2n+2})^2}
\frac{(1-\rho^{2(n+m+1)})}{2(n+m+1)} 
\end{multline*}
By equating the coefficients of each term we get
\[\frac{n-m+1}{1-\rho^{2(n-m+1)}}=\frac{(n+1)^2(1-\rho^{2(n+m+1)})}{
(n+m+1)(1-\rho^{2(n+1)})^2} \]
for $n\neq -1$ and $n\neq m-1.$  Let $l=n+1$ and $\xi=\rho^2.$ Then the last
equation turns into 
\begin{equation}\label{Eq1} 
\frac{l^2-m^2}{l^2}=\frac{(1-\xi^{l+m})(1-\xi^{l-m})}{(1-\xi^l)^2}
\end{equation}
for $l\neq 0$ and $l\neq m.$ From now on we will choose $l>m.$ Let us  define
the following function 
\[ f_l(x)=\frac{(1-\xi^{l+x})(1-\xi^{l-x})}{l^2-x^2}.\]
One can show that $f_l$ is an even, nonnegative function defined on $(-l,l)$
and \eqref{Eq1} implies that $f_l(0)=f_l(m).$ Then using the logarithmic
differentiation we get
\begin{align} \label{Eq2}
(l^2-x^2)\frac{f'_l(x)}{f_l(x)}&=(l^2-x^2)\xi^l \ln \xi \left(
\frac{\xi^{-x}}{1-\xi^{l-x}}- \frac{\xi^{x}}{1-\xi^{l+x}} \right)+2x\\
 \nonumber  &=(l^2-x^2)\xi^l \ln \xi
\left(\frac{\xi^{-x}-\xi^{x}}{(\xi^x-\xi^l)(\xi^{-x}-\xi^{l})} \right)+2x
\end{align}
Power series expansions for  $\xi^x$ and $\xi^{-x}$ imply that 
\[ \xi^{-x}-\xi^x=-2\ln \xi \sum_{j=0}^{\infty} \frac{(\ln
\xi)^{2j}}{(2j+1)!}x^{2j+1} .\]
Then  there exists $0<\delta<l$ so that $ |\xi^{-x}-\xi^x|\leq -3\ln \xi | x| $
for $|x|\leq \delta.$  Now we use estimate $(\xi^{\delta}-\xi^l)^2\leq
(\xi^x-\xi^l)(\xi^{-x}-\xi^l) $ for $0<x\leq \delta$ to get 
\[(l^2-x^2)\frac{f'_l(x)}{f_l(x)} \geq 
\left(\frac{-6(\ln\xi)^2(l^2-\delta^2)\xi^l}{(\xi^\delta-\xi^l)^2}+2\right)
x \text{ for } 0\leq x \leq \delta. \]
Then since $0<\xi<1$ there exists $l_0>4m$ so that  
$(l^2-x^2)\frac{f'_l(x)}{f_l(x)} \geq \frac{x}{2}$ for $l\geq l_0$ and $0\leq
x\leq \delta.$ Hence $f_l$ are increasing functions on $[0,\delta]$ for all
$l\geq l_0.$ On the other hand for $\delta\leq x \leq m $ there exists
$l_1\geq 4m$ such that  $l\geq l_1$ implies that 
\[\left| (l^2-x^2)\xi^l \ln \xi \left( \frac{\xi^{-x}}{1-\xi^{l-x}} -
\frac{\xi^{x}}{1-\xi^{l+x}} \right) \right|\leq \delta.\]
Then \eqref{Eq2} implies that $f'_l>0$ on $[\delta, m]$ for $l\geq l_1$.
Therefore, $f_l$ are increasing functions on $[0,m]$  and $f_l(m)>0$ for $l\geq
\max\{l_0,l_1\}>4m.$ 
\end{proof}

\singlespace

\end{document}